\documentclass[11pt]{article}

\usepackage[utf8]{inputenc}
\usepackage[T1]{fontenc}
\usepackage{mathpazo}
\usepackage{comment}
\RequirePackage[english]{babel}

\RequirePackage[a4paper,margin=2.5cm]{geometry}
\RequirePackage{parskip}

\newcommand{\affiliation}{\footnote}
\makeatletter
\def\@fnsymbol#1{\ensuremath{\ifcase#1\or *\or \dagger\or \ddagger\or \mathsection\or \|\or **\or \dagger\dagger \or \ddagger\ddagger \else\@ctrerr\fi}}
\makeatother

\usepackage{xcolor}
\definecolor{cblue}{RGB}{0,70,140}
\definecolor{cgreen}{RGB}{100,140,0}
\definecolor{cred}{RGB}{190,10,50}

\usepackage[shortlabels]{enumitem}
\setlist{itemsep=0ex,topsep=0ex,parsep=0.4ex}

\usepackage{amsmath,amsfonts,amssymb,amsthm,mathtools,bbm,centernot}

\usepackage[hyphens]{url} 
\usepackage[hidelinks,colorlinks,pagebackref]{hyperref} 
\hypersetup{citecolor=cgreen,linkcolor=cblue,urlcolor=cblue}
\usepackage[capitalise,nameinlink,noabbrev,compress]{cleveref} 

\renewcommand*{\backref}[1]{}
\renewcommand*{\backrefalt}[4]{
	\ifcase #1 Not cited.%
	\or $\uparrow$#2%
	\else $\uparrow$#2%
	\fi%
}

\let\oldbibliography\bibliography
\renewcommand{\bibliography}[1]{
  {

    \hypersetup{linkcolor=cred}
    \bibliographystyle{bibstyle}
    \oldbibliography{#1}
  }
}

\theoremstyle{plain}
\newtheorem{theorem}{Theorem}[section]
\newtheorem{lemma}[theorem]{Lemma}

\newtheorem{proposition}[theorem]{Proposition}

\newtheorem{corollary}[theorem]{Corollary}

\newtheorem{remark}[theorem]{Remark}
\theoremstyle{definition}

\makeatletter
\renewenvironment{proof}[1][\proofname]
{\par\pushQED{\qed}
	\normalfont\topsep6\p@\@plus6\p@\relax\trivlist
	\item[\hskip\labelsep\bfseries#1\@addpunct{.}]
	\ignorespaces}
{\popQED\endtrivlist\@endpefalse}
\makeatother

\DeclareMathOperator{\diam}{diam}

\title{Asymptotically attaining the Moore bound}
\author{
  W. Cames van Batenburg%
  \affiliation{%
    D\'epartement d'Informatique,
    Universit\'e libre de Bruxelles, Belgium
    (\textsf{\href{mailto:w.p.s.camesvanbatenburg@gmail.com}%
    {w.p.s.camesvanbatenburg@gmail.com}}).
    Supported by the Belgian National Fund for Scientific Research (FNRS).
  }
  \and
  S. Korsky%
  \affiliation{%
    Independent researcher,
    (\textsf{\href{mailto:samkorsky11@gmail.com}{samkorsky11@gmail.com}}).
  }
}
\date{\today}

\begin{document}
\maketitle
\begin{abstract}
For positive integers $d$ and $k$, let $n_k(d)$ be the maximum order of a graph of maximum degree at most $d$ and diameter at most $k$. We prove that
$$ \lim_{d\to\infty}\frac{n_k(d)}{d^k}=1$$
for every fixed $k$, thereby resolving the asymptotic degree--diameter problem for fixed diameter and proving a conjecture of Bollob\'as. The lower bound comes from regular graphs $H_{k,q}$, indexed by prime powers $q$, whose vertices are partial flags in
$\mathbb{F}_q^{\,2k+1}$. These graphs have diameter $k$ and order $|V(H_{k,q})| =(1+o(1))\Delta(H_{k,q})^k$. We also construct, for every fixed $\ell \ge 2$, graphs of maximum degree at most $d$ and line-graph diameter at most $\ell$ with $(1+o(1))d^{\ell}$ edges.
\end{abstract}

\section{Introduction}

Let $k$ and $d$ be positive integers. At most how many vertices can a graph $G$ have if its maximum degree $\Delta(G)$ is at most $d$ and its diameter $\diam G$ is at most $k$?  In this work we obtain an asymptotically tight answer to that question. Define
\begin{equation}\label{eq:nkd}
  n_k(d):=\max\{|V(G)|:\Delta(G)\le d\text{ and }\diam G\le k\}.
\end{equation}
 A breadth-first search from any vertex gives the \emph{Moore bound}
\begin{equation}\label{eq:moore}
  n_k(d)\le 1+d\sum_{j=0}^{k-1}(d-1)^j
  =d^k+O_k(d^{k-1}).
\end{equation}
A graph attaining equality in the Moore bound is called a \emph{Moore graph}.
Complete graphs attain the bound when $k=1$, and the odd cycle $C_{2k+1}$ attains it when $d=2$.  Apart from these families and the trivial degree-one cases, the Petersen graph and the Hoffman--Singleton graph attain the bound for $(d,k)=(3,2)$ and $(7,2)$, respectively.  The only remaining potential nontrivial Moore graph would have degree $57$ and diameter $2$; no Moore graph exists for any other parameters~\cite{HS,BI,Dam}.  We refer to the survey of Miller and \v{S}ir\'a\v{n}~\cite{MS} for further background on the degree--diameter problem.

The rarity of exact equality leads naturally to the asymptotic problem in which $k$ is fixed and $d$ tends to infinity.  Bollob\'as conjectured that, for every fixed $k\ge2$ and every $\varepsilon>0$, there are infinitely many $d$ for which
$$ n_k(d)\ge(1-\varepsilon)d^k;$$
equivalently, $\limsup_{d\to\infty}\frac{n_k(d)}{d^k}=1$; see the discussion in~\cite[p.~2]{CCJK}. This longstanding conjecture was previously known only for $k\in\{2,3,5\}$, through constructions arising from finite generalized polygons~\cite{Del1,Del2,BSS}.  For all sufficiently large $k$, Canale and G\'omez proved that infinitely many degrees $\Delta$ admit a graph of diameter $k$ and order greater than $(0.629\,\Delta)^k$~\cite{CG}; see also~\cite[Proposition~5]{CCJK}.  This was the strongest coefficient known uniformly over all sufficiently large diameters. 
Another variant of Bollob\'as' conjecture~\cite{Bol, Bol_later}
is that $\liminf_{d\to\infty}\frac{n_k(d)}{d^k}=1$. We prove that stronger statement.

\begin{theorem}\label{thm:vertex}
For every fixed positive integer $k$,
\begin{equation}\label{eq:vertex-limit}
  \lim_{d\to\infty}\frac{n_k(d)}{d^k}=1.
\end{equation}
\end{theorem}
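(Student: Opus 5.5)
The upper bound $\limsup_{d\to\infty} n_k(d)/d^k\le 1$ is immediate from the Moore bound \eqref{eq:moore}. So everything lies in the lower bound $\liminf_{d\to\infty} n_k(d)/d^k\ge 1$, and I would prove it in two stages. First, construct for each prime power $q$ a graph $H_{k,q}$ with $\diam H_{k,q}\le k$ and $|V(H_{k,q})|=(1+o(1))\Delta(H_{k,q})^k$. Second, interpolate between the admissible degrees $\Delta(H_{k,q})$ to cover every $d$. The case $k=1$ is trivial via $K_{d+1}$, so assume $k\ge 2$.

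For the construction I follow the abstract. Work in $V=\mathbb{F}_q^{2k+1}$. The natural candidate is to take as vertices partial flags $U_1\subset U_2\subset\cdots$ built from subspaces of dimension $k$ and $k+1$, or more generally flags with a prescribed set of dimensions. Adjacency should mean that two flags are in a relative position differing by a single elementary move, possibly composed with a duality/polarity $U\mapsto U^{\perp}$ that swaps dimension $j$ with $2k+1-j$. This makes the graph symmetric and the degree uniform. The count I need is this: the number of flags should be $\asymp q^{N}$ with $N$ equal to $k$ times the exponent of the degree. For example, if the degree is $\approx q^{m}$, I need about $q^{km}$ vertices with leading coefficient $1$ on both sides. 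Leading coefficients of Gaussian binomials are $1$ ($\binom{n}{j}_q=q^{j(n-j)}(1+O(1/q))$), so both counts are asymptotically pure powers of $q$. That makes the ratio tend to $1$ exactly, not merely up to a constant, and this is why flags rather than Cayley-type constructions suit the problem. I would choose the flag type and adjacency rule so that, from a fixed flag, the number of flags reachable by words of length $k$ in the generating moves is $(1+o(1))q^{km}$ and these words essentially never collide. This is a Bruhat-cell computation: the largest Schubert cell $BwB$ with $w$ of length $k$ in the relevant parabolic setting has size $q^{\ell(w)}$-type leading term.

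The diameter claim is the heart of the matter and the step I expect to be the main obstacle. I must show that any two flags are joined by a path of at most $k$ moves. My plan is to use the Bruhat decomposition: the relative position of two flags is a double coset $W_P w W_P$ of the Weyl group $S_{2k+1}$. A single edge corresponds to a specific double coset $W_P s W_P$. Diameter $\le k$ then reduces to a combinatorial statement: every element of $W_P\backslash W/W_P$ is a product of at most $k$ copies of the generator coset, where Hecke-type multiplication of Bruhat cells satisfies $BsB\cdot BwB\supseteq BswB$. Dimension $2k+1$ is presumably chosen exactly so that the longest relative position needs $k$ steps. I would verify this by an explicit linear-algebra argument: given flags $F,F'$, construct intermediate flags one step at a time, each step increasing $\dim(U_i\cap U_i')$ by one, like walking in a generalized polygon. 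Then I would check that $k$ steps suffice from the worst case $U\cap U'=0$.

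Finally, I interpolate. For arbitrary large $d$, choose the largest prime power $q$ with $\Delta(H_{k,q})\le d$. By prime gaps (for example Baker–Harman–Pintz, $p_{n+1}-p_n=O(p_n^{0.525})$), the ratio $\Delta(H_{k,q})/d\to 1$, since $\Delta$ is a polynomial in $q$ with leading term $q^m$. Hence
$$n_k(d)\ge |V(H_{k,q})|=(1+o(1))\Delta(H_{k,q})^k=(1+o(1))d^k.$$
Together with the Moore bound this gives \eqref{eq:vertex-limit}.
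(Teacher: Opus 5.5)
Your upper bound via the Moore bound and your interpolation step are fine. The paper needs only the prime number theorem, taking $q$ to be the largest prime with $q\le d^{1/(2k)}-1$. The genuine gap is that you never actually define $H_{k,q}$. The flag type (``subspaces of dimension $k$ and $k+1$, or more generally flags with a prescribed set of dimensions'') and the adjacency (``a single elementary move, possibly composed with a polarity'') are left open. As a result, the two properties that carry the whole theorem, $\diam H_{k,q}\le k$ and $|V(H_{k,q})|=(1+o(1))\Delta(H_{k,q})^k$, appear only as requirements a good choice should meet (``I would choose the flag type \ldots\ so that \ldots\ these words essentially never collide''). Nothing in the proposal shows that such a choice exists. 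The concrete hint you give does not obviously work either: $k$-subspaces of $\mathbb F_q^{2k+1}$ with Grassmann adjacency have diameter $k$, but only about $q^{k(k+1)}$ vertices against degree about $q^{2k}$, so $|V|/\Delta^k\to0$ for $k\ge2$. And ``walking in a generalized polygon'' is exactly the approach that only ever succeeded for $k\in\{2,3,5\}$.

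Three ideas are missing.
\begin{enumerate}
\item \emph{The construction and its counts.} Take as vertices the even-rank parts $(F_2,F_4,\ldots,F_{2k})$ of complete flags in $\mathbb F_q^{2k+1}$, and make two of them adjacent if some odd-rank partial flag completes both. Every partial flag of either parity has exactly $(q+1)^k$ completions, since each missing member is a point of a $2$-dimensional quotient. Hence $|V|=[2k+1]_q!/(q+1)^k\sim q^{2k^2}$, and trivially $\Delta\le K_q=(q+1)^k((q+1)^k-1)\sim q^{2k}$, so $|V|\sim K_q^k$.
\item \emph{The diameter bound.} This is a sorting argument rather than a Hecke-algebra computation. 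Two complete flags are induced by two orderings of a common basis, and odd--even transposition sort reorders the basis in $2k+1$ alternating rounds. Swapping positions $i,i+1$ changes only $F_i$, so the even part changes only during the $k$ even rounds. This gives a path of length at most $k$.
\item \emph{No collision estimate is needed.} You do not need a non-collision estimate or an exact degree computation. Since $\diam\le k$, the Moore bound gives $|V|\le(1+o(1))\Delta^k$. Together with $|V|\sim K_q^k$ and $\Delta\le K_q$, this forces $\Delta\sim K_q$, and hence $|V|\sim\Delta^k$.
\end{enumerate}
Without a specific construction and an argument of this kind, your proposal does not establish the lower bound.
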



Our construction uses complete flags in $\mathbb F_q^{2k+1}$.  The vertices are the even-rank subflags, and two vertices are adjacent when some odd-rank subflag extends both to complete flags. 
Any two complete flags are induced by two orderings of a common basis. Odd-even transposition sorting (alternating rounds of disjoint adjacent swaps) transforms one ordering into the other, and since only every second round changes the even ranks, the corresponding vertices are joined by a path of length at most $k$. Each vertex has exactly $(q + 1)^k$ extensions to a complete flag, yielding a natural upper bound on the degree of the graph. The Moore bound shows that this upper bound is asymptotically tight, and the prime number theorem supplies suitable field sizes for all sufficiently large degree bounds.

We also consider the edge version of the problem, posed by Erd\H{o}s and Ne\v{s}et\v{r}il~\cite{Erdos88} and studied recently by Cambie, Cames van Batenburg, de Joannis de Verclos, and Kang~\cite{CCJK}.  For positive integers $\ell$ and $d$, write
\begin{equation}\label{eq:hell}
  h_\ell(d)-1:=\max\{|E(G)|:\Delta(G)\le d
  \text{ and }\diam L(G)\le\ell\},
\end{equation}
where $L(G)$ is the line graph of $G$.  Cambie et al.\ conjectured that, for every fixed $\ell$ and every $\varepsilon>0$, $h_\ell(d)\ge(1-\varepsilon)d^\ell$ for infinitely many $d$
\cite[Conjecture~3]{CCJK}.  We prove the stronger liminf statement.

\begin{corollary}\label{cor:edge}
For every fixed integer $\ell\ge2$,
\begin{equation}\label{eq:edge-limit}
  \liminf_{d\to\infty}\frac{h_\ell(d)}{d^\ell}\ge1.
\end{equation}
\end{corollary}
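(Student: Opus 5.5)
The plan is to build, for each $\ell$, an explicit family of graphs from complete flags, rather than to deduce the corollary from \cref{thm:vertex}. The guiding picture is a bipartite ``incidence'' version of the construction sketched after \cref{thm:vertex}. Fix a dimension $m$ and a prime power $q$. On one side put the subflags of $\mathbb F_q^{m}$ using the even ranks $\{2,4,\dots\}\cap[1,m-1]$; on the other side put the subflags using the odd ranks $\{1,3,\dots\}\cap[1,m-1]$. Join an even subflag $E$ to an odd subflag $O$ exactly when $E\cup O$ is a complete flag.

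With this choice the edges of $G_{m,q}$ are in bijection with the complete flags of $\mathbb F_q^m$. Two edges are adjacent in $L(G_{m,q})$ if and only if the corresponding complete flags agree on all odd ranks or on all even ranks.

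\textbf{Step 1 (degrees).} Given $E$, each missing odd rank $2i+1$ must be filled by a subspace lying strictly between the fixed spaces of ranks $2i$ and $2i+2$. There are $q+1$ choices for each such rank, and the choices are independent. The same count applies when $E$ and $O$ swap roles. Hence every vertex has degree $(q+1)^{a}$, where $a$ is the number of complementary ranks (with the obvious adjustment at the ends of the range $[1,m-1]$).

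\textbf{Step 2 (line-graph diameter).} Take two complete flags $F,F'$ and choose a common adapted basis $v_1,\dots,v_m$, so that $F$ and $F'$ correspond to two orderings of this basis. Run odd--even transposition sort, which finishes within $m$ rounds. A round of disjoint adjacent swaps at positions $(2i-1,2i)$ changes only the spans of odd rank, and a round at positions $(2i,2i+1)$ changes only those of even rank. So consecutive flags produced by the sort share their even part or their odd part. These flags are therefore adjacent, or equal, edges of $L(G_{m,q})$, and $\diam L(G_{m,q})\le m$.

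\textbf{Step 3 (counting, odd $\ell$).} The number of complete flags is $\prod_{j=1}^{m}\frac{q^j-1}{q-1}=(1+o(1))q^{m(m-1)/2}$. Taking $m=\ell=2k+1$, both sides have degree $d=(q+1)^k$ and the number of edges is $(1+o(1))q^{k(2k+1)}=(1+o(1))d^{\ell}$. To pass from prime powers $q$ to all large $d$, set $q$ to be the largest prime with $(q+1)^k\le d$. The prime number theorem (primes in $[x,x+o(x)]$) then gives $(q+1)^k=(1-o(1))d$, which yields the $\liminf$ for odd $\ell$.

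\textbf{Main obstacle: even $\ell=2k$.} Here the same scheme with $m=2k$ is unbalanced. The even side has $k-1$ ranks and the odd side $k$, so the degrees are $q^{k}$ and $q^{k-1}$. The edge count $q^{k(2k-1)}$ then falls short of $d^{2k}=q^{2k^2}$. The fix must change which ranks are fixed on each side so that both degrees equal $d$ with $d^{2k}\approx q^{m(m-1)/2}$, while the sort still finishes in $2k$ rounds.

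The first thing I will try is dimension $m=2k+1$, with one side given by the even-rank subflags and the other side by subflags in which one extreme rank (rank $1$ or rank $2k$) is also forgotten. Equivalently, vertices are taken modulo the choice of one boundary subspace. The hope is to regain one sorting round: odd--even transposition sort on $2k+1$ items needs $2k+1$ rounds only because of the boundary position, and that position becomes invisible once the corresponding rank is forgotten. The degrees must then be rebalanced, either by adjusting $q$ on the two sides or by a balanced blow-up that multiplies each low-degree vertex by the right factor while preserving $L$-distances.

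I expect this balancing, together with verifying that the modified sort really uses only $2k$ rounds, to be the genuinely delicate part. The odd case is essentially routine given Steps 1--3.
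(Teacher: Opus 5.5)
\textbf{Gap: the even case.} Your odd case is correct. For $\ell=2k+1$, the compatibility graph between $\mathcal P_{\mathrm{even}}(\mathbb F_q^{\ell})$ and $\mathcal P_{\mathrm{odd}}(\mathbb F_q^{\ell})$ is $(q+1)^k$-regular. Its edges are the $[\ell]_q!=(1+o(1))(q+1)^{k\ell}$ complete flags, and \cref{lem:routing} gives $\diam L\le\ell$. For odd $\ell$ this is a legitimate alternative to the paper: it works in $\mathbb F_q^{\ell}$ rather than $\mathbb F_q^{2\ell-1}$, and its degree is known exactly. But the corollary is claimed for every $\ell\ge2$. For even $\ell\ge4$ you give only a tentative plan, and it does not work as described. (The case $\ell=2$ is trivially covered by $K_{d,d}$.)

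Forgetting rank $1$ on the odd side in $\mathbb F_q^{2k+1}$ gives degree $(q+1)^{k-1}$ on the even side and $(q^2+q+1)(q+1)^{k-1}$ on the other side. The number of edges is $[2k+1]_q!/(q+1)\approx q^{2k^2+k-1}$, whereas you need $d^{2k}\approx q^{2k^2+2k}$. Equalising the degrees by replacing each high-degree vertex with $q^2+q+1$ twins only raises the count to about $q^{2k^2+k+1}$, which is still too small for $k\ge2$. It also fails to preserve line-graph distances: two edges through a common vertex may land on different twins and end up at distance $2$.

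\textbf{The missing idea.} What you need is exactly the deduction from the vertex problem that you set aside. For a graph $H$, let $B(H)$ have parts $V(H)_0,V(H)_1$, with $x_0y_1$ an edge whenever $x=y$ or $xy\in E(H)$. Then $\diam L(B(H))\le\diam H+1$ (\cref{lem:edge-reduction}). To see this, write edges as $[x,y]$ and put $k:=\diam H$. To go from $[a,b]$ to $[c,d]$:
\begin{enumerate}[(i)]
\item take a walk $b=z_0,z_1,\dots,z_r$ in $H$, stationary steps allowed, with $r\le k$ and $r\equiv k\pmod 2$;
\item end the walk at $c$ if $k$ is odd and at $d$ if $k$ is even, padding a shortest path by one repetition if the parity is wrong;
\item alternately replace coordinates: $[a,z_0],[z_1,z_0],[z_1,z_2],\dots$.
\end{enumerate}
The edges $x_0x_1$ are what make stationary steps legal, and the last edge of this sequence meets $[c,d]$.

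Applying this to the regular graph $H=H_{\ell-1,q}$ of \cref{prop:H} gives a $(\Delta_q+1)$-regular bipartite graph with $\diam L\le\ell$ and $N_q(\Delta_q+1)=(1+o(1))\Delta_q^{\ell}$ edges. This works for every $\ell\ge2$, regardless of parity. Choosing $q$ via the prime number theorem then proceeds as in your Step 3.
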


The graphs used to prove Corollary~\ref{cor:edge} are bipartite.  Together with the matching upper bound for odd-cycle-free graphs in
\cite[Theorem~7]{CCJK}, this also shows that the limit is $1$ when restricted to bipartite graphs.
In contrast, for general graphs the bound is not tight. For $\ell=2$, it is known~\cite{CGTT90} that $\frac{h_2(d)}{d^2}$ converges to $\frac{5}{4}$, which is attained by blown-up $5$-cycles and is related to the infamous Erd\H{o}s--Ne\v{s}et\v{r}il conjecture on coloring the square of a line graph; see~\cite{Cranston26, CvBKP20, DHJKV26} for more background. For $\ell=3$, Kumar, Mohar and Pragada~\cite{KMP26} recently showed that $\liminf_{d\to\infty}\frac{h_3(d)}{d^3}\ge \frac{253}{225}$ is strictly larger than $1$. For $\ell \in\{4,6\}$, the lower bound $1$ was known to follow from geometric constructions similar to those used for the vertex version. For all sufficiently large values of $\ell$, our Corollary~\ref{cor:edge} improves the lower bound from $ (0.629)^{\ell}$ to $1$. For all $\ell\ge 3$, the current best upper bound~\cite{CCJK} is $3/2$.

\section{Alternating routes between complete flags}

Let $V$ be a $t$-dimensional vector space.  A complete flag in $V$ is an increasing chain of subspaces
$$F=(0=F_0<F_1<\cdots<F_{t-1}<F_t=V),\qquad \dim F_i=i.$$
Write
$$F_{\mathrm{odd}}
  :=(F_i)_{\substack{1\le i<t\\ i\text{ odd}}}, \qquad F_{\mathrm{even}}
  :=(F_i)_{\substack{1\le i<t\\ i\text{ even}}}.$$
\begin{lemma}\label{lem:routing}
Let $F$ and $F'$ be complete flags in a $t$-dimensional vector space.  There is a sequence of complete flags
$$F=G^0,G^1,\ldots,G^t=F'$$
such that, at odd-numbered steps, only odd-rank members may change, and at even-numbered steps, only even-rank members may change.
\end{lemma}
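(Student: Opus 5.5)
Following the sketch in the introduction, the plan is to reduce the lemma to a statement about permutations. First I will find a basis $v_1,\dots,v_t$ of $V$ and a permutation $\sigma$ of $\{1,\dots,t\}$ such that
\[
F_i=\langle v_1,\dots,v_i\rangle
\quad\text{and}\quad
F'_i=\langle v_{\sigma(1)},\dots,v_{\sigma(t)}\text{ restricted to the first } i\rangle,
\]
that is, $F'_i=\langle v_{\sigma(1)},\dots,v_{\sigma(i)}\rangle$ for all $i$. This is the Bruhat decomposition: any two complete flags have a common adapted basis. I would prove it directly by induction on $t$. Let $j$ be the least index with $F_j\not\subseteq F'_{t-1}$. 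Then $F_j\cap F'_{t-1}=F_{j-1}$ by dimension count, since $\dim(F_j\cap F'_{t-1})\ge j-1$ and $F_j\not\subseteq F'_{t-1}$. Pick $w\in F_j\setminus F_{j-1}$; then $V=F'_{t-1}\oplus\langle w\rangle$. Apply induction in $F'_{t-1}$ to the flags $(F_i\cap F'_{t-1})$, which after deleting the repeated term at index $j$ is a complete flag, and $(F'_i)_{i\le t-1}$. Adding $w$ as the last vector then gives the common basis. This step is routine but needs care with indices.

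Next, for every ordering $\pi$ of the basis, let $G(\pi)$ be the flag with $G(\pi)_i=\langle v_{\pi(1)},\dots,v_{\pi(i)}\rangle$. Swapping the entries of $\pi$ in positions $i$ and $i+1$ changes only $G_i$. A set of disjoint adjacent transpositions at positions $i$ all of one parity therefore changes only members of that parity rank.

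Finally, I will invoke odd–even transposition sort. Starting from the identity arrangement and sorting toward $\sigma$ (equivalently, sorting the sequence $\sigma^{-1}$), alternate rounds comparing pairs at positions $(1,2),(3,4),\dots$ (odd $i$) with rounds comparing pairs at $(2,3),(4,5),\dots$ (even $i$), starting with an odd round. In each round, swap those pairs that are out of order. The classical theorem, provable via the 0–1 principle, says that $t$ rounds suffice to sort $t$ items. Setting $G^s$ to be the flag after round $s$ gives $G^0=F$, $G^t=F'$, and step $s$ changes only ranks of the parity of $s$. Rounds in which nothing needs swapping are allowed, since flags may repeat.

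The main obstacle is citing or proving the $t$-round bound for odd–even transposition sort with the prescribed starting parity. The standard result holds for either starting phase, so I would cite Knuth or sketch the 0–1 principle argument.
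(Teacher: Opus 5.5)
Your proposal is correct and follows the paper's proof: a common adapted basis, then odd--even transposition sort of the labels $\sigma^{-1}(i)$ over $t$ alternating rounds starting with an odd round. The paper cites Abramenko--Brown and Alon--Chung--Graham where you sketch proofs, and in your induction note that $w$ goes last only in the $F'$-ordering; in the $F$-ordering it must be inserted at position $j$, which works because $(F_i\cap F'_{t-1})+\langle w\rangle=F_i$ for $i\ge j$.
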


\begin{proof}

By the standard fact that any two complete flags are induced by two orderings of a common basis (see the proof of axiom {\rm (B1)} in \cite[Section~4.3, pp.~183--184]{AB}), there are a basis $v_1,\ldots,v_t$ and a permutation $\pi\in S_t$ such that
$$  F_i=\operatorname{span}(v_1,\ldots,v_i),\qquad
  F'_i=\operatorname{span}(v_{\pi(1)},\ldots,v_{\pi(i)})
  \quad(0\le i\le t).$$
Label $v_i$ by its target position $\pi^{-1}(i)$ and apply odd--even transposition sorting to these labels. This transforms the order $v_1,\ldots,v_t$ into $v_{\pi(1)},\ldots,v_{\pi(t)}$. Allowing stationary steps, it uses $t$ alternating rounds~\cite[p.~518]{ACG}. In an odd round one swaps a subset of the disjoint adjacent pairs
$$(1,2),(3,4),\ldots,$$
and in an even round one swaps a subset of
$$(2,3),(4,5),\ldots.$$
Swapping the vectors in positions $i$ and $i+1$ changes only the
$i$-dimensional prefix span.  An odd round therefore changes only odd-rank members of the associated flag, and an even round changes only even-rank members.  The associated flags give the required route.
\end{proof}

For a $t$-dimensional vector space $V$, let $\mathcal F(V)$ be its set of
complete flags, and set
$$\mathcal P_{\mathrm{odd}}(V)
  :=\{F_{\mathrm{odd}}:F\in\mathcal F(V)\},
  \qquad
  \mathcal P_{\mathrm{even}}(V)
  :=\{F_{\mathrm{even}}:F\in\mathcal F(V)\}.$$
An odd-rank partial flag $F_{\mathrm{odd}}$ and an even-rank partial flag $F_{\mathrm{even}}$ are called \emph{compatible} if their members, interleaved by rank, form a complete flag.  A compatible pair determines that complete flag uniquely.

For a prime power $q$, write
 $$[j]_q:=1+q+\cdots+q^{j-1},\qquad
  [t]_q!:=\prod_{j=1}^t[j]_q.$$
Thus $[t]_q!$ is the number of complete flags in $\mathbb F_q^t$.

\section{The halved flag construction}

Fix $k\ge1$, let $t=2k+1$, and take $V=\mathbb F_q^t$.  Define $H_{k,q}$ to be the graph with vertex set $\mathcal P_{\mathrm{even}}(V)$ in which two distinct vertices are adjacent if they have a common compatible member of $\mathcal P_{\mathrm{odd}}(V)$. Equivalently, $H_{k,q}$ is obtained by taking the square of the bipartite compatibility graph between $\mathcal P_{\mathrm{even}}(V)$ and $\mathcal P_{\mathrm{odd}}(V)$, and then restricting to the vertices that represent $\mathcal P_{\mathrm{even}}(V)$. Via the last step we only retain half the vertices, so in this sense we have ``halved the flags.''

\begin{proposition}\label{prop:H}
For every prime power $q$ and every $k\ge1$, the graph $H_{k,q}$ is regular. Writing $\Delta_q$ for its degree, we have
\begin{equation}\label{eq:H-order}
  N_q:=|V(H_{k,q})|=\frac{[2k+1]_q!}{(q+1)^k},
\end{equation}
\begin{equation}\label{eq:H-degree}
  \Delta_q\le K_q:=(q+1)^k\left((q+1)^k-1\right),
\end{equation}
and
\begin{equation}\label{eq:H-diameter}
  \diam H_{k,q}=k.
\end{equation}
For fixed $k$, as $q\to\infty$ through prime powers,
\begin{equation}\label{eq:H-asymptotic}
  \Delta_q=(1+o(1))K_q,
  \qquad
  N_q=(1+o(1))\Delta_q^k.
\end{equation}
\end{proposition}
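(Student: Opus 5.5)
The proof has four parts: count extensions, use transitivity for regularity, use Lemma~\ref{lem:routing} for the upper bound on the diameter, and use the Moore bound~\eqref{eq:moore} for the asymptotics.

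\emph{Counting.} Take an even-rank partial flag $X=(X_2<X_4<\cdots<X_{2k})$ in $V=\mathbb F_q^{2k+1}$. To complete it we must choose each odd member $X_{2i-1}$ with $X_{2i-2}<X_{2i-1}<X_{2i}$, for $i=1,\dots,k$, using $X_0=0$. Each choice is a line in the $2$-dimensional quotient $X_{2i}/X_{2i-2}$, so there are $q+1$ options, and the choices are independent. Hence $X$ has exactly $(q+1)^k$ compatible odd flags. Every complete flag arises from exactly one even flag, so $N_q=[2k+1]_q!/(q+1)^k$.

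Symmetrically, an odd flag $Y=(Y_1<\cdots<Y_{2k-1})$ is completed by choosing $Y_{2i}$ with $Y_{2i-1}<Y_{2i}<Y_{2i+1}$, for $i=1,\dots,k$, using $Y_{2k+1}=V$. Again there are $q+1$ options each, so $Y$ has $(q+1)^k$ compatible even flags.

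\emph{Regularity and degree bound.} $\mathrm{GL}(V)$ acts transitively on complete flags, hence on $\mathcal P_{\mathrm{even}}(V)$. This action preserves compatibility, so it acts by automorphisms of $H_{k,q}$, and $H_{k,q}$ is vertex-transitive and in particular regular. Every neighbour of $X$ is obtained by picking one of the $(q+1)^k$ odd flags compatible with $X$ and then one of its other $(q+1)^k-1$ compatible even flags. This gives $\Delta_q\le K_q$, with possible overcounting.

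\emph{Upper bound $\diam H_{k,q}\le k$.} Apply Lemma~\ref{lem:routing} with $t=2k+1$ to complete flags $F\supseteq X$ and $F'\supseteq X'$. The even parts change only at steps $2,4,\dots,2k$, since step $t$ is odd. Fix $j$ and compare $G^{2j}$ with $G^{2j+2}$:
\begin{itemize}
\item $G^{2j+1}$ has the same even part as $G^{2j}$, because step $2j+1$ is odd;
\item $G^{2j+1}$ has the same odd part as $G^{2j+2}$, because step $2j+2$ is even.
\end{itemize}
So $(G^{2j})_{\mathrm{even}}$ and $(G^{2j+2})_{\mathrm{even}}$ are both compatible with $(G^{2j+1})_{\mathrm{odd}}$, and are therefore equal or adjacent. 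This yields a walk of length at most $k$ from $X$ to $X'$.

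\emph{Asymptotics.} We have $N_q=(1+o(1))q^{k(2k+1)-k}=(1+o(1))q^{2k^2}$ and $K_q^k=(1+o(1))q^{2k^2}$. Since the diameter is at most $k$, the Moore bound~\eqref{eq:moore} gives $N_q\le \Delta_q^k+O(\Delta_q^{k-1})$. Hence $\Delta_q\ge(1-o(1))N_q^{1/k}=(1-o(1))K_q$. Combined with $\Delta_q\le K_q$, this gives $\Delta_q=(1+o(1))K_q$ and then $N_q=(1+o(1))\Delta_q^k$.

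\emph{Lower bound $\diam H_{k,q}\ge k$ for every $q$ (main obstacle).} For large $q$ this is immediate from Moore: diameter at most $k-1$ would force $N_q=O(q^{2k(k-1)})$. For all $q$ I would use Weyl-group relative positions $\delta(F,F')\in S_t$.

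Adjacency of $X$ and $X'$ means there are complete flags $A\supseteq X$ and $A'\supseteq X'$ sharing their odd part. Then $\delta(A,A')$ lies in $W_{\mathrm{even}}$, the subgroup generated by the commuting transpositions $s_2,s_4,\dots$. Any other extensions of $X$ and $X'$ differ from $A$ and $A'$ by elements of $W_{\mathrm{odd}}$, generated by $s_1,s_3,\dots$.

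Now take a path of length $m$ from $X$ to $X'$ and any extensions $F\supseteq X$, $F'\supseteq X'$. Tits' multiplicativity of relative positions (if $\delta(A,B)=u$ and $\delta(B,C)=w$ then $\delta(A,C)\le uw$ in the Bruhat/Demazure sense) shows that $\delta(F,F')$ lies in the Demazure product
\[
W_{\mathrm{odd}}\,W_{\mathrm{even}}\,W_{\mathrm{odd}}\cdots W_{\mathrm{odd}},
\]
which has $2m+1$ alternating factors.

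Choose $X,X'$ coming from opposite complete flags, so that $\delta(F,F')=w_0$ (the reversal) for all extensions. The reversal of $t$ elements cannot be written as such an alternating product with fewer than $t=2k+1$ factors. This is the classical lower bound for odd--even transposition sort: the element starting at one end must travel $t-1$ positions, moving at most one per round, and the first round is of the wrong parity for it. Hence $2m+1\ge 2k+1$, so $m\ge k$.

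The technical care in the proof goes into justifying this Demazure-product containment.
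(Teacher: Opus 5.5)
Your proof matches the paper everywhere except the lower bound $\diam H_{k,q}\ge k$. The order count, the $\mathrm{GL}(V)$-transitivity, the degree bound, the use of Lemma~\ref{lem:routing} for $\diam H_{k,q}\le k$, and the Moore-bound squeeze for the asymptotics are all the same.

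For the lower bound the paper uses no building theory. For a partial flag $R$ it lets $\lambda(R)$ be the last recorded rank of $R$ before $e_1$ appears. Compatible even and odd partial flags have $\lambda$-values differing by at most $1$: both are the largest rank of their parity below the rank where $e_1$ enters the common complete flag. Hence adjacent vertices differ by at most $2$. Meanwhile $\lambda(A_{\mathrm{even}})=0$ and $\lambda(C_{\mathrm{even}})=2k$ for the standard flag $A$ and the opposite flag $C$, which forces $k$ steps.

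Your relative-position argument is valid and proves more, since it constrains the whole permutation $\delta(F,F')$ rather than one coordinate. But it rests on Tits' relative positions and the subword/Demazure property, which you cite rather than prove. Your sorting bound follows one element moving at most one position per round (it must be the element at position $t$, which the first odd round cannot touch). That is exactly what $\lambda$ does with $e_1$, so the paper's argument is the one-coordinate shadow of yours.

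If you keep your route, a simpler finish than Demazure products is by length. Flags that differ only in the pairwise non-adjacent odd ranks, or only in the even ranks, are at gallery distance at most $k$, so your chain gives $d(F,F')\le(2m+1)k$. Opposite flags are at distance $\ell(w_0)=k(2k+1)$, hence $m\ge k$.

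One statement needs correcting. It is false that $\delta(F,F')=w_0$ for \emph{all} extensions $F$, $F'$ of the even parts of two opposite flags. For $k=1$, with $A_2=\operatorname{span}(e_1,e_2)$ and $C_2=\operatorname{span}(e_3,e_2)$, both extensions may take $\operatorname{span}(e_2)$ as their rank-$1$ member, which gives $\delta(F,F')=s_2$. This is harmless: your containment holds for arbitrary extensions, so you may simply take $F$ and $F'$ to be the two opposite flags themselves.
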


\begin{proof}
\smallskip
\noindent\emph{Order and degree.}
There are $k$ odd and $k$ even ranks among $1,\ldots,t-1$.  Fix
$P_{\mathrm{even}}\in\mathcal P_{\mathrm{even}}(V)$.  For each odd
$i\in\{1,\ldots,t-1\}$, a completion to a full flag must choose an
intermediate space
$$(P_{\mathrm{even}})_{i-1}<F_i<(P_{\mathrm{even}})_{i+1},$$
where $F_0=0$ and $F_t=V$ are understood.  The corresponding quotient has dimension two, and hence there are $q+1$ choices for $F_i$.  These choices are independent, so every even-rank partial flag has exactly $(q+1)^k$ completions.  The same argument applies to odd-rank partial flags.  Counting complete flags by their even-rank parts proves~\eqref{eq:H-order}.

Any two even-rank partial flags are carried to one another by an invertible linear map.  Since invertible linear maps preserve compatibility, they induce automorphisms of $H_{k,q}$.  Thus $H_{k,q}$ is vertex-transitive and hence regular.  From a fixed vertex there are exactly $(q+1)^k$ compatible odd-rank partial flags, and each of these is compatible with exactly $(q+1)^k-1$ other even-rank partial flags.  Different odd-rank partial flags may yield the same neighbor, so \eqref{eq:H-degree} follows.

\smallskip
\noindent\emph{Diameter.}
Take $P_{\mathrm{even}},P'_{\mathrm{even}}
\in\mathcal P_{\mathrm{even}}(V)$ and complete them to full flags $F$ and $F'$.  Apply Lemma~\ref{lem:routing} to obtain an odd-first route
$$F=G^0,G^1,\ldots,G^t=F'.$$
For $0\le j\le k$, set
$$P_j:=(G^{2j})_{\mathrm{even}}
      =(G^{2j+1})_{\mathrm{even}},$$
and, for $1\le j\le k$, set
$$Q_j:=(G^{2j-1})_{\mathrm{odd}}
      =(G^{2j})_{\mathrm{odd}}.$$
The equality for $P_j$ holds because odd steps leave all even-rank
members unchanged, while the equality for $Q_j$ holds because even
steps leave all odd-rank members unchanged. Thus $Q_j$ is compatible
with $P_{j-1}$ via $G^{2j-1}$ and with $P_j$ via $G^{2j}$. Hence
$$  P_{j-1} \text{ and } P_j \text{ are either equal or adjacent in } H_{k,q}.$$
Finally, since $G^0=F$ and $G^t=F'$, we have
$P_0=P_{\mathrm{even}}$ and $P_k=P'_{\mathrm{even}}$.
Deleting repetitions gives a path of length at most $k$.

For the reverse inequality, fix a basis $e_1,\ldots,e_t$.  For an odd- or even-rank partial flag $R=(R_i)$, put
$$\lambda(R):=\max\left(\{i:e_1\notin R_i\}\cup\{0\}\right).$$
Thus $\lambda(R)$ is the last recorded rank before $e_1$ appears.  If
$R\in\mathcal P_{\mathrm{even}}(V)$ and $S\in\mathcal P_{\mathrm{odd}}(V)$ are compatible, let $r$ be the first rank at which $e_1$ occurs in their common complete flag.  Then $\lambda(R)$ and $\lambda(S)$ are the largest even and odd ranks below $r$, respectively, so
$$|\lambda(R)-\lambda(S)|\le1.$$
It follows that adjacent vertices of $H_{k,q}$ have $\lambda$-values
differing by at most two.  For the standard and opposite complete flags
$$A_i=\operatorname{span}(e_1,\ldots,e_i),
  \qquad
  C_i=\operatorname{span}(e_t,e_{t-1},\ldots,e_{t+1-i}),$$
one has
$$\lambda(A_{\mathrm{even}})=0,
  \qquad
  \lambda(C_{\mathrm{even}})=2k.$$
A path of length $s$ between these vertices would therefore satisfy $2k\le2s$, so $s\ge k$.  This proves~\eqref{eq:H-diameter}.

\smallskip
\noindent\emph{Asymptotics.}
For fixed $k$, we have
$[2k+1]_q!
=q^{k(2k+1)}\left(1+O_k(q^{-1})\right).$
Equations~\eqref{eq:H-order} and~\eqref{eq:H-degree} give $N_q=q^{2k^2}\left(1+O_k(q^{-1})\right)$ and $K_q=q^{2k}\left(1+O_k(q^{-1})\right)$, 
and hence
\begin{equation}\label{eq:N-K-asymptotic}
  N_q=\left(1+O_k(q^{-1})\right)K_q^k.
\end{equation}
Since $N_q\to\infty$, the Moore bound forces $\Delta_q\to\infty$, and hence $N_q\le(1+o(1))\Delta_q^k$.
Together with~\eqref{eq:N-K-asymptotic} and $\Delta_q\le K_q$, this gives $\Delta_q\sim K_q$, and therefore $N_q\sim\Delta_q^k$.  This proves \eqref{eq:H-asymptotic}.
\end{proof}

\begin{proof}[Proof of Theorem~\ref{thm:vertex}]
The case $k=1$ follows from the complete graph $K_{d+1}$.  Fix $k\ge2$ and let $d\to\infty$.  Let $q$ be the largest prime not exceeding $d^{1/(2k)}-1$.  The prime number theorem gives $q=(1+o(1))d^{1/(2k)}$.
For this choice,
$$\Delta(H_{k,q})\le K_q<(q+1)^{2k}\le d,
  \qquad
  K_q=(1+o(1))d.$$
Proposition~\ref{prop:H} therefore yields
$$n_k(d)\ge N_q=(1+o(1))K_q^k=(1+o(1))d^k.$$
Together with the Moore bound~\eqref{eq:moore}, this proves
\eqref{eq:vertex-limit}.
\end{proof}

\begin{remark}
For fixed $k\ge2$, the explicit degree bound in Proposition~\ref{prop:H} gives
$$\frac{|V(H_{k,q})|^{1/k}}{\Delta(H_{k,q})}
  \ge \frac{N_q^{1/k}}{K_q}
  =1-\frac{2k-1}{q}+O_k(q^{-2}).$$
More involved variants can improve the coefficient of $q^{-1}$, but we have chosen the halved construction because it gives the shortest and most transparent proof. It would be interesting to determine, for fixed $k$, the optimal rate at which $n_k(d)/d^k$ tends to one. This is particularly relevant in light of the following two questions asked by Bermond and Bollob\'as~\cite{BB81}, which remain open. Let
$M_k(d) = 1 + d\sum_{j = 0}^{k - 1}(d - 1)^j$
denote the Moore bound. Does $\frac{n_k(d)}{M_k(d)}$ converge to $1$ along every sequence of pairs $(k,d)$ for which both $k$ and $d$ tend to infinity, and is $M_k(d) - n_k(d)$ unbounded?
\end{remark}

\section{The edge version}

Given a graph $H$, define a bipartite graph $B(H)$ with parts $V(H)_0$ and $V(H)_1$ by joining $x_0$ to $y_1$ whenever $x=y$ or $xy\in E(H)$.

\begin{lemma}\label{lem:edge-reduction}
For every graph $H$,
$$ \diam L(B(H))\le\diam H+1.$$
\end{lemma}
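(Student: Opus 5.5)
We bound the distance in $L(B(H))$ between two arbitrary edges $e=x_0y_1$ and $f=u_0w_1$ of $B(H)$. Since $L(B(H))$ has an edge between two edges exactly when they share an endpoint, it suffices to exhibit a walk in $B(H)$ that starts with the edge $e$ and ends with the edge $f$ and uses at most $\diam H+2$ edges. That corresponds to at most $\diam H+1$ steps in the line graph. We may assume $D:=\diam H<\infty$; otherwise there is nothing to prove.

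The basic tool is lifting walks from $H$ to $B(H)$. Suppose $z^0,z^1,\ldots,z^m$ is a sequence of vertices of $H$ in which consecutive terms are equal or adjacent. Then $z^0_0,z^1_1,z^2_0,z^3_1,\ldots$ is a walk in $B(H)$ whose sides alternate, because each consecutive pair $z^j,z^{j+1}$ gives an edge of $B(H)$. The loops $x_0x_1$ are what make stationary steps allowed. Also, every edge $x_0y_1$ of $B(H)$ has $x=y$ or $x\sim y$ in $H$.

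The construction has two cases. In the first case we start at the endpoint $x_0$ of $e$ and must end at the endpoint $w_1$ of $f$. Consider the $H$-sequence $y,x,\ldots,u,w$, where the middle part is a shortest $x$–$u$ path in $H$ of length $s\le D$. Its lift begins $y_1,x_0,\ldots$ and alternates sides. The condition for landing correctly is that $u$ lands on side $0$, which requires $s$ to be even. The other option is to start at $y_1$ and use the sequence $x,y,\ldots,w,u$ with a $y$–$w$ path. If neither parity works directly, we pad the middle path with one stationary step to fix the parity. The total number of edges is then $s+2$ or $s+3$, and we need to make sure this is at most $D+2$.

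The second case handles the parity mismatch more carefully. We have four choices of endpoint pairing: $(x_0,u_0)$, $(x_0,w_1)$, $(y_1,u_0)$ and $(y_1,w_1)$. Pairings with the same side need an even number of edges between them in $B(H)$, and pairings with opposite sides need an odd number. For the pairing of $a_i$ and $b_j$, the distance in $B(H)$ is the smallest $m\ge d_H(a,b)$ with $m\equiv i+j \pmod 2$, which is at most $d_H(a,b)+1$. The line-graph distance is $\min$ over the four pairings of $d_{B}(a_i,b_j)+1$. We therefore need some pairing with $d_B\le D$.

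The main obstacle is the case where $d_H(a,b)=D$ with the wrong parity for all relevant pairings. Here we use that $x$ and $y$ are equal or adjacent, and likewise $u$ and $w$. Then $d_H(y,w)\le d_H(x,u)+2$, and similar triangle bounds hold. We argue as follows. Pick the pairing $(x_0,u_0)$. If $d_H(x,u)$ is even, we get $d_B\le D$ at once. Otherwise, $d_H(x,u)$ is odd and $d_H(x,u)\le D$, so $d_H(x,u)+1$ may equal $D+1$ only when $d_H(x,u)=D$. In that situation we try the pairing $(x_0,w_1)$, which has odd target parity. We have $d_H(x,w)\le D$, and if $d_H(x,w)$ is odd we are done. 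If it is even, then $d_H(x,w)\le D-1$, since $D$ is odd here; the lift then needs $d_H(x,w)+1\le D$, which also works. So every case gives $d_B\le D$, hence line-graph distance at most $D+1$. The remaining check is that all these walks really exist, i.e. that the padding by loops $z_0z_1$ is legitimate, which the definition of $B(H)$ guarantees.
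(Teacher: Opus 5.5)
Your proof is correct and is essentially the paper's argument: lift a shortest path of $H$ between suitable endpoints of $e$ and $f$ to an alternating walk in $B(H)$, pad with one loop edge $z_0z_1$ when the parity is wrong, and choose a same-side or opposite-side endpoint pairing according to the parity of $D$. The only difference is bookkeeping. The paper picks the pairing directly from the parity of $D$, while you try $(x_0,u_0)$ first and fall back to $(x_0,w_1)$ only when $d_H(x,u)=D$ is odd; your exploratory ``first case'' paragraph and the triangle bound $d_H(y,w)\le d_H(x,u)+2$ are never used and can be dropped.
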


\begin{proof}
Let $\diam H\le k$, and write an edge $x_0y_1$ of $B(H)$ as $[x,y]$.
Take two edges $e=[a,b]$ and $f=[c,d]$.  If $k$ is odd, choose a sequence
$$b=z_0,z_1,\ldots,z_r=c$$
of odd length $r\le k$ in which consecutive terms are equal or adjacent in $H$.  If $k$ is even, choose such a sequence from $b$ to $d$ of even length $r\le k$.  Such a sequence exists by taking a shortest path and, if necessary, repeating one vertex once.

Starting from $[a,z_0]$, alternately replace the first and second coordinates:
$$[a,z_0],[z_1,z_0],[z_1,z_2],[z_3,z_2],\ldots.$$
Every displayed pair is an edge of $B(H)$, and consecutive pairs share an endpoint.  After $r$ replacements, the last edge shares its $0$-side
endpoint with $f$ when $r$ is odd, and its $1$-side endpoint with $f$ when $r$ is even.  Thus $e$ and $f$ are at distance at most $r+1\le k+1$ in $L(B(H))$.
\end{proof}

\begin{proof}[Proof of Corollary~\ref{cor:edge}]
Fix $\ell\ge2$, put $k=\ell-1$, and let $d\to\infty$.  Let $q$ be the
largest prime not exceeding $d^{1/(2k)}-1$, and put $H=H_{k,q}$.  Write
$N_q=|V(H)|$ and $\Delta_q=\Delta(H)$, and form $G=B(H)$.  Since $H$ is
$\Delta_q$-regular, $G$ is $(\Delta_q+1)$-regular and
$$|E(G)|=N_q(\Delta_q+1).$$
Moreover,
$$\Delta(G)=\Delta_q+1\le K_q+1
  <(q+1)^{2k}\le d,$$
and Lemma~\ref{lem:edge-reduction} gives
$$\diam L(G)\le k+1=\ell.$$
The prime number theorem and Proposition~\ref{prop:H} give
$$K_q=(1+o(1))d,
  \qquad
  \Delta_q=(1+o(1))d,
  \qquad
  N_q=(1+o(1))\Delta_q^k.$$
Consequently,
$$|E(G)|=N_q(\Delta_q+1)
  =(1+o(1))d^{k+1}=(1+o(1))d^\ell.$$
Since $h_\ell(d)-1\ge|E(G)|$, this proves~\eqref{eq:edge-limit}.

The graph $G$ is bipartite.  Conversely, Theorem~7 (or more precisely, Theorem~10) of~\cite{CCJK} 
implies that every bipartite graph of maximum degree at most $d$ and line-graph diameter at most $\ell$ has at most $d^\ell$ edges.  Thus the maximum number of edges in the bipartite version is $(1+o(1))d^\ell$.
\end{proof}

After this work was completed, we learned that the projective-plane construction in [20, Lemma 3.3] shares the rank-2 incidence graph underlying the case k=1 of our construction. The higher-rank construction and its odd–even routing argument are independent and different.
\paragraph{Tool and computational resource disclosure.}
GPT-5.6 Pro was used during exploratory brainstorming about possible constructions. The discussion was directed by formulating the required parameter scale and proposing higher rank spherical buildings as the geometric setting. The tool suggested splitting complete flags into their odd- and even-rank subflags, initially in connection with the edge problem. The authors developed this suggestion into the halved-flag construction, formulated and verified all mathematical arguments, and take full responsibility for the content. Generative-AI tools assisted in developing the accompanying Lean~4 formalization~\cite{CK26}.


\begin{thebibliography}{99}
\raggedright

\bibitem{AB}
P.~Abramenko and K.~S.~Brown,
\emph{Buildings: Theory and Applications},
Graduate Texts in Mathematics, vol.~248, Springer, 2008.
\href{https://doi.org/10.1007/978-0-387-78835-7}
{doi:10.1007/978-0-387-78835-7}.

\bibitem{ACG}
N.~Alon, F.~R.~K.~Chung, and R.~L.~Graham,
Routing permutations on graphs via matchings,
\emph{SIAM J. Discrete Math.} \textbf{7} (1994), 513--530.
\href{https://doi.org/10.1137/S0895480192236628}
{doi:10.1137/S0895480192236628}.

\bibitem{BSS}
M.~Bachrat\'y, J.~\v{S}iagiov\'a, and J.~\v{S}ir\'a\v{n},
Asymptotically approaching the Moore bound for diameter three by Cayley graphs,
\emph{J. Combin. Theory Ser. B} \textbf{134} (2019), 203--217.
\href{https://doi.org/10.1016/j.jctb.2018.06.003}
{doi:10.1016/j.jctb.2018.06.003}.

\bibitem{BI}
E.~Bannai and T.~Ito,
On finite Moore graphs,
\emph{J. Fac. Sci. Univ. Tokyo Sect. IA Math.} \textbf{20} (1973), 191--208.
\href{https://doi.org/10.15083/00039786}{doi:10.15083/00039786}.

\bibitem{BB81}
J.-C.~Bermond and B.~Bollob\'as,
The diameter of graphs---a survey,
\emph{Congr. Numer.} \textbf{32} (1981), 3--27.
\href{https://inria.hal.science/hal-02426939}
{HAL:hal-02426939}.


\bibitem{Bol}
B.~Bollob\'as,
\emph{Extremal Graph Theory},
London Mathematical Society Monographs, vol.~11, Academic Press, 1978.

\bibitem{Bol_later}
B.~Bollob\'as,
\emph{Random Graphs},
2nd ed., Cambridge Studies in Advanced Mathematics, vol.~73,
Cambridge University Press, Cambridge, 2001.
\href{https://doi.org/10.1017/CBO9780511814068}
{doi:10.1017/CBO9780511814068}.

\bibitem{CCJK}
S.~Cambie, W.~Cames van Batenburg, R.~de Joannis de Verclos, and R.~J.~Kang,
Maximizing line subgraphs of diameter at most $t$,
\emph{SIAM J. Discrete Math.} \textbf{36} (2022), 939--950.
\href{https://doi.org/10.1137/21M1437354}
{doi:10.1137/21M1437354}.

\bibitem{CvBKP20}
W.~Cames van Batenburg, R.~J.~Kang, and F.~Pirot,
Strong cliques and forbidden cycles,
\emph{Indag. Math. (N.S.)} \textbf{31} (2020), 64--82.
\href{https://doi.org/10.1016/j.indag.2019.09.003}
{doi:10.1016/j.indag.2019.09.003}.

\bibitem{CK26}
W.~Cames van Batenburg, S.~Korsky,
\emph{Lean formalization for 
Asymptotically attaining the Moore bound},
GitHub repository, 2026,
\href{https://github.com/woutercvb/wewantmoore}
{\texttt{github.com/woutercvb/wewantmoore}}.

\bibitem{CG}
E.~A.~Canale and J.~G\'omez,
Asymptotically large $(\Delta,D)$-graphs,
\emph{Discrete Appl. Math.} \textbf{152} (2005), 89--108.
\href{https://doi.org/10.1016/j.dam.2005.03.008}
{doi:10.1016/j.dam.2005.03.008}.

\bibitem{CGTT90}
F.~R.~K.~Chung, A.~Gy\'arf\'as, Z.~Tuza, and W.~T.~Trotter,
The maximum number of edges in $2K_2$-free graphs of bounded degree,
\emph{Discrete Math.} \textbf{81} (1990), 129--135.
\href{https://doi.org/10.1016/0012-365X(90)90144-7}
{doi:10.1016/0012-365X(90)90144-7}.

\bibitem{Cranston26}
D.~W.~Cranston,
Coloring, list coloring, and painting squares of graphs
(and other related problems),
\emph{Electron. J. Combin.}, Dynamic Survey DS25, 2nd ed., 2026.
\href{https://doi.org/10.37236/10898}
{doi:10.37236/10898}.

\bibitem{Dam}
R.~M.~Damerell,
On Moore graphs,
\emph{Proc. Cambridge Philos. Soc.} \textbf{74} (1973), 227--236.
\href{https://doi.org/10.1017/S0305004100048015}
{doi:10.1017/S0305004100048015}.

\bibitem{DHJKV26}
E.~Davey, E.~Hurley, R.~de Joannis de Verclos, R.~J.~Kang,
and J.~Volec,
Strong edge-colouring via local flag algebras,
\emph{arXiv preprint}, 2026.
\href{https://arxiv.org/abs/2607.17421}
{arXiv:2607.17421}.

\bibitem{Del1}
C.~Delorme,
Grands graphes de degr\'e et diam\`etre donn\'es,
\emph{European J. Combin.} \textbf{6} (1985), 291--302.
\href{https://doi.org/10.1016/S0195-6698(85)80043-3}
{doi:10.1016/S0195-6698(85)80043-3}.

\bibitem{Del2}
C.~Delorme,
Large bipartite graphs with given degree and diameter,
\emph{J. Graph Theory} \textbf{9} (1985), 325--334.
\href{https://doi.org/10.1002/jgt.3190090304}
{doi:10.1002/jgt.3190090304}.

\bibitem{Erdos88}
P.~Erd\H{o}s,
Problems and results in combinatorial analysis and graph theory,
\emph{Discrete Math.} \textbf{72} (1988), 81--92.
\href{https://doi.org/10.1016/0012-365X(88)90196-3}
{doi:10.1016/0012-365X(88)90196-3}.

\bibitem{HS}
A.~J.~Hoffman and R.~R.~Singleton,
On Moore graphs with diameters 2 and 3,
\emph{IBM J. Res. Develop.} \textbf{4} (1960), 497--504.
\href{https://doi.org/10.1147/rd.45.0497}{doi:10.1147/rd.45.0497}.

\bibitem{KMP26}
H.~Kumar, B.~Mohar, and S.~Pragada,
An improved bound for the strong clique index of graphs,
preprint, 2026.
\href{https://doi.org/10.48550/arXiv.2607.02698}
{doi:10.48550/arXiv.2607.02698}.

\bibitem{MS}
M.~Miller and J.~\v{S}ir\'a\v{n},
Moore graphs and beyond: a survey of the degree/diameter problem,
\emph{Electron. J. Combin.}, Dynamic Survey DS14, 2nd ed., 2013.
\href{https://doi.org/10.37236/35}{doi:10.37236/35}.

\end{thebibliography}
\end{document}